\newcommand{\pushright}[1]{\ifmeasuring@#1\else\omit\hfill$\displaystyle#1$\fi\ignorespaces}
\newcommand{\pushleft}[1]{\ifmeasuring@#1\else\omit$\displaystyle#1$\hfill\fi\ignorespaces}
\titleformat{\section}{\large\bfseries\filcenter}{\thesection}{1em}{}
\titleformat{\subsection}{\bfseries}{\thesubsection}{1em}{}
\let\oldbibliography\thebibliography
\renewcommand{\thebibliography}[1]{\oldbibliography{#1}
\setlength{\itemsep}{0\baselineskip}}
\renewcommand\@makefntext[1]{\leftskip=2em\hskip-0.5em\@makefnmark#1}
\renewcommand\thanks[1]{%
  \begingroup
  \renewcommand\thefootnote{}\footnote{#1}%
  \addtocounter{footnote}{-1}%
  \endgroup
}
\tikzset{%
  symbol/.style={
    draw=none,
    every to/.append style={
      edge node={node [sloped, allow upside down, auto=false]{$#1$}}
    },
  },
}
\newtheorem*{main-theorem}{Main Theorem}
\newtheorem{theorem}{Theorem}[section]
\newtheorem{proposition}[theorem]{Proposition}
\theoremstyle{definition}
\newtheorem{remark}[theorem]{Remark}
\newtheorem{definition}[theorem]{Definition}
\newtheorem{example}[theorem]{Example}
\newtheorem{examples}[theorem]{Examples}
\renewcommand{\:}{\colon} 
\renewcommand{\i}{\mathrm{i}} 
\renewcommand{\epsilon}{\varepsilon}
\newcommand{\NN}{\mathbb{N}}
\newcommand{\RR}{\mathbb{R}}
\newcommand{\CC}{\mathbb{C}}
\newcommand{\A}{\mathsf{A}}
\newcommand{\C}{\mathsf{C}}
\newcommand{\D}{\mathsf{D}}
\renewcommand{\L}{\mathsf{L}}
\renewcommand{\H}{\mathsf{H}}
\renewcommand{\P}{\mathsf{P}}
\newcommand{\K}{\mathsf{K}}
\newcommand{\V}{\mathsf{V}}
\renewcommand{\S}{\mathsf{S}}
\renewcommand{\d}{\mathrm{d}}
\newcommand{\I}{\mathrm{I}}
\renewcommand{\c}{\mathrm{c}}
\newcommand{\M}{\mathsf{M}}	
\newcommand{\T}{\mathsf{T}}	
\newcommand{\vol}{{\textnormal{vol}_{g}\,}} 
\begin{document}
%
%
%
%
\begin{center}
\vspace*{-1cm}

{\Large{\bf  On boundary conditions for linearised Einstein's equations}
\\[1ex]
\small \textit{Applied Mathematics Letters} (2024), 109210. 
\\
DOI: \href{https://doi.org/10.1016/j.aml.2024.109210}{10.1016/j.aml.2024.109210}
} 

\vspace{5mm}

{\bf by}

\vspace{3mm}
\noindent
{  \bf Matteo Capoferri$^{1,2}$, Simone Murro$^{3,4}$ and Gabriel Schmid$^{3}$}\\[2mm]
\noindent   {\it $^1$ Department of Mathematics, Heriot-Watt University, Edinburgh, UK}\\[2mm]
\noindent   {\it $^2$  Maxwell Institute for Mathematical Sciences, Edinburgh, UK}\\[2mm]
\noindent   {\it $^3$ Dipartimento di Matematica,  Universit\`a di Genova \& INdAM, Italy}\\[2mm]
\noindent   {\it $^4$ Istituto Nazionale di Fisica Nucleare, Sezione di Genova, Genova, Italy}\\[2mm]

Emails: \ {\tt m.capoferri@hw.ac.uk, murro@dima.unige.it, gabriel.schmid@dima.unige.it}
\\[10mm]
\end{center}
\begin{abstract}
    We investigate the properties of a fairly large class of boundary conditions for the linearised Einstein equations in the Riemannian setting, ones which generalise the linearised counterpart of boundary conditions proposed by Anderson. Through the prism of the quest to quantise gravitational waves in curved spacetimes, we study their properties from the point of view of ellipticity, gauge invariance, and the existence of a spectral gap.
\end{abstract}
\paragraph*{Keywords:} elliptic boundary value problems, spectral theory, linearised gravity, gauge theory.
\paragraph*{MSC 2020:}Primary: 35J57; Secondary: 58C40, 58J32, 81T13, 83C35
\tableofcontents
\section{Introduction}
\label{Introduction}
Einstein's equations of General Relativity are a cornerstone of modern science. Yet, boundary conditions for these equations are not very well understood, owing to their rather complicated mathematical structure.
In the nonlinear setting, a natural choice of boundary conditions for Einstein's equations in Riemannian signature is a (nonlinear) analogue of the harmonic gauge condition, together with prescribing either the induced boundary metric (\emph{Dirichlet conditions}) or the second fundamental form of the boundary (\emph{von Neumann conditions}). However, it is well known that both of these boundary conditions do not lead to a well-defined elliptic boundary value problem, see e.g.~\cite{Anderson2008,AvramidiEsposito}. Furthermore, they are not gauge invariant (in an appropriate sense to be specified later). Hence, one is forced to look for alternatives. Good candidates --- ones that implement ellipticity --- have been studied by Anderson \cite{Anderson2008,Anderson2010},  Schlenker \cite{Schlenker} and more recently by Liu--Santon--Wiseman \cite{LiuSantonWiseman}. More precisely, Anderson proposed in \cite{Anderson2008} a new set of boundary conditions for the nonlinear Einstein equations in Riemannian signature, which he showed to be elliptic, i.e.~to satisfy the \textit{Shapiro-Lopatinskij conditions} (see e.g.~\cite[Chap.~XX]{Hormander3} or \cite{KrupchykTuomela}), consisting in prescribing: (i) a harmonic gauge condition on the boundary, (ii) the conformal class of the boundary metric and (iii) the mean curvature (the trace of the second fundamental form) of the boundary. Henceforth, we will refer to these as \textit{Anderson boundary conditions}. 

In this paper, we are concerned with the study of boundary conditions for the \emph{linearised} Einstein equations in Riemannian signature, which are a pivotal ingredient in the quantisation of gravitational waves~\cite{GerardMurroWrochna23}. 
Indeed, as argued for instance by Witten in~\cite{Witten,Witten2023}, it turns out that the Anderson boundary conditions are relevant in Euclidean quantum gravity.
Let us emphasise that, despite being a problem formulated within a linear theory, the quantisation of gravitational waves propagating in a generic spacetime satisfying Einstein's field equation is still an open problem. The main reason for this is the lack of a rigorous construction of a ``physical'' state, namely a positive gauge invariant linear functional on the space of gauge invariant observables which satisfies the so-called \emph{Hadamard condition}. Recently, a new strategy based on Wick rotation coupled with elliptic theory on manifolds with boundary and microlocal analysis was explored in~\cite{GerardMurroWrochna23}. The essential idea is
to reduced the construction of a state to the construction of a Calder\'on projector for the elliptic operator 
$ \D_{2}:=-\Delta_{2}+2\,\mathrm{Riem}_{g} $
acting on symmetric $(0,2)$-tensors $u=(u_{\alpha\beta})$,
where $\Delta_{2}=g^{\alpha\beta}\nabla_{\alpha}\nabla_{\beta}$ is the connection Laplacian and $\mathrm{Riem}_{g}$ the \textit{Riemann operator}\footnote{$\mathrm{Riem}_{g}(u)_{\alpha\beta}:=\tensor{R}{^\gamma_\alpha_\beta^\delta}u_{\gamma\delta}$, where we adopt Einstein's summation notation throughout and the convention $(\nabla_{\alpha}\nabla_{\beta}-\nabla_{\beta}\nabla_{\alpha})\omega_{\gamma}=\tensor{R}{_\alpha_\beta_\gamma^\delta}\omega_{\delta}$.} on the Riemannian manifold with boundary $(\M:=[-\T,\T]\times \Sigma,g=\d s\otimes\d s+\gamma_s)$, with $\T>0$ and $\Sigma$ is a smooth connected oriented complete Riemannian $3$-manifold with $\partial\Sigma=\emptyset$. Clearly, the construction of the Calder\'on projector depends on the choice of boundary conditions at $s=\pm\T$. For the construction to be physically meaningful, the boundary conditions, on top of being elliptic, should at the very least satisfy:
(a) a \emph{spectral gap condition}, i.e.~the spectrum of $\D_2$ should not contain $0$;
(b) a \emph{gauge invariance condition}. Let us emphasise that ellipticity not only ensures that the boundary value problem is well behaved, but is also crucial for perturbation theory, see e.g.~\cite[Sec.~2.1.]{Witten}. In~\cite{GerardMurroWrochna23}, the authors considered Dirichlet boundary conditions for $u_{\alpha\beta}$ at $s=\pm\T$: this choice guarantees the presence of a spectral gap, but does not allow one to achieve gauge invariance, hence the quest remains open. 
\medskip 

All in all, the above considerations and challenges are the starting point and motivation for this work.\bigskip

\noindent\textbf{Main results.} Let $(\M,g)$ be a smooth connected oriented Riemannian $4$-manifold with non-empty boundary $\partial\M$, solving Einstein's equations $\mathrm{Ric}(g)-\frac{1}{2}\mathrm{Scal}(g)g+\Lambda g=0$ for some choice of the \textit{cosmological constant} $\Lambda\in\RR$. Furthermore, let  $\V_{k}:=\CC\otimes\T^{\ast}\M^{\otimes_{s}k}$ be the $\CC$-vector bundle of symmetric $(0,k)$-tensors equipped with the inner product
\begin{align*}
     (\,\cdot\,,\,\cdot\,)_{\V_{k}}:=k!\int_{\M}\,(g^{-1})^{\otimes k}(\overline{\,\cdot\,},\,\cdot\,)\,\vol
\end{align*}
on compactly-supported smooth sections $\Gamma_{\c}(\V_{k})$. Denoting by $\L\:\Gamma(\V_{2})\to\Gamma(\V_{2})$ the linearisation of Einstein's equations, we define the operator
\begin{align*}
	\P\:\Gamma(\V_{2})\to\Gamma(\V_{2}),\qquad \P:=2(\L\circ\I)=-\Delta_{2}+2\mathrm{Riem}_{g}-\I\circ\d\circ\delta\,,
\end{align*}
where the involution $\I:=\mathrm{id}-\frac{1}{2}\mathrm{tr}_{g}(\cdot)g$ denotes the \textit{trace-reversal}, $(\delta u)_{\mu}=-2\nabla^{\lambda}u_{\lambda\mu}$ is the \textit{divergence} of $u\in\Gamma(\V_{2})$, and $(\d \omega)_{\alpha\beta}= \frac{1}{2}(\nabla_{\alpha}\omega_{\beta}+\nabla_{\beta}\omega_{\alpha})$  the \textit{symmetrised gradient} of $\omega\in\Gamma(\V_{1})$. By construction, $\P$ is formally self-adjoint w.r.t.~$(\,\cdot\,,\I\,\cdot\,)_{\V_{2}}$. Diffeomorphism invariance of Einstein's equations translates into the fact that $\P$ is invariant under the linear \textit{gauge transformation} $u\mapsto u+\K\omega$, where $\K:=\I\circ\d\:\Gamma(\V_{1})\to\Gamma(\V_{2})$.
Imposing the gauge condition $\delta u=0$, known as \textit{harmonic} or \textit{de Donder gauge}, reduces $\P$ to the elliptic operator $\D_{2}$ from the previous section, and the remaining gauge freedom is parameterised by the quantity $\omega$ satisfying $\D_{1}\omega=0$, with $\D_{1}:=\delta\K=-\Delta_{1}-\Lambda$. Note that $\K$ intertwines $\D_{1}$ and $\D_{2}$, i.e. $\K\D_{1}=\D_{2}\K$ --- see \cite{GerardMurroWrochna23} for further details.\medskip

In this paper: (i)~We examine the linearised version of Anderson's boundary conditions, rectifying some inaccuracies in the existing literature (Section~\ref{Linearised Anderson Boundary Conditions}). (ii)~We establish the most general version of Anderson-type conditions which ensure \textit{gauge invariance} in the sense of Definition~\ref{def:gauge invariance} (Section~\ref{General Conformal Boundary Conditions}). (iii)~In the guise of a no-go theorem, we demonstrate that, except for special geometries, one cannot achieve the spectral gap condition for the latter (Section~\ref{Spectral Theory of Linearised Anderson Boundary Conditions}).\medskip

Our main result is concisely summarised by the following theorem.

\begin{main-theorem}
Let $(\Sigma,\gamma)$ be a smooth connected oriented complete Riemannian $3$-manifold with $\partial\Sigma=\emptyset$. For $T>0$ consider the Riemannian $4$-manifold $\Omega:=[-\T,\T]\times\Sigma$ equipped with the metric $g:=\d s\otimes\d s+\gamma$ and let $\D_{2}$ be the linearised Einstein operator on $\Omega$.
\begin{itemize}
    \item[(i)] Suppose that the Laplace--Beltrami operator on $(\Sigma,\gamma)$ has no spectral gap. Then $\D_{2}$ supplied with the linearised Anderson boundary conditions on $\partial\Omega$ has no spectral gap, namely, $0\in\sigma(\D_{2})$\footnote{Here and further on $\sigma(\A)$ denotes the spectrum of the operator $\A$.}.
    \item[(ii)] Suppose that $\mathrm{Ric}(\gamma)=0$ and there exist non-trivial $\L^{2}$-harmonic $1$-forms on $\Sigma$. If $\D_{2}$ is supplied with any first-order, linear, fully determined, elliptic and gauge invariant boundary conditions which include the conditions\footnote{Here $u_{\Sigma\Sigma}:=u\vert_{\T\Sigma}$ denotes the tangential part of $u\in\Gamma(\V_{2})$.}
\begin{align}
\label{main theorem equation 1}
    \delta u=0\qquad\text{and}\qquad u_{\Sigma\Sigma}=\frac{1}{3}\mathrm{tr}_{\gamma}(u_{\Sigma\Sigma})\gamma\quad\text{on}\quad\partial\Omega=\{\pm\T\}\times\Sigma \,,
\end{align}
then $0\in\sigma(\D_{2})$.
 \end{itemize}
\end{main-theorem}

The two conditions in~\eqref{main theorem equation 1} correspond to the harmonic gauge condition and the linearisation of the requirement that the conformal class of the boundary metric $\gamma$ is fixed, respectively. This provides nine independent conditions on $u\in\Gamma(\V_{2})$; since $\mathrm{rank}_{\CC}(\V_{2})=10$, we are free to add one more (scalar) condition to obtain a fully-determined boundary value problem. If one imposes the vanishing of the linearisation of the mean curvature of the boundary, one recovers the linearised Anderson conditions as a special case. 
\section{Linearised Anderson boundary conditions}
\label{Linearised Anderson Boundary Conditions}

Let $(\M,g)$ be a Riemannian manifolds with non-empty boundary as in Section~\ref{Introduction}. At the non-linear level, the Anderson boundary conditions consist in imposing a non-linear analogue of the harmonic gauge, see e.g.~\cite[Chapter~7]{choquet-bruhat_general_2009}, together with 
\begin{align}\label{eq:Anderson}
\begin{cases}
	[\gamma]&=\text{fixed},\\ 
	\mathrm{tr}_{\gamma}(k)&=\text{fixed},
\end{cases}
\end{align}
where $\gamma:=g\vert_{\partial\M}$ denotes the induced boundary metric, $[\gamma]$ its conformal class and $k\in\Gamma(\T\partial\M^{\otimes_{s}2})$ the corresponding second fundamental form. To linearise these conditions, let us consider a one-parameter family of Riemannian metrics $\hat{g}(\lambda)$ on $\M$ with $\hat{g}(0)=g$, and formally expand $\hat{g}(\lambda)=g+h\lambda+\mathcal{O}(\lambda^{2})$. In the following, all the quantities and operations associated with the ``full'' metric $\hat{g}$ will be denoted with a hat, whereas objects associated with the background metric $g$ without a hat. Let us choose a local coordinate frame of $\partial\M$, denoted by $(\partial_{i})_{i=1,2,3}$, and complete it to a local frame on $\M$ by adding a local vector field $\partial_{0}$ not tangential to $\partial\M$. We will denote the full frame by $(\partial_{\mu})_{\mu=0,1,2,3}$. In this frame, the induced metric $\hat{\gamma}:=\hat{g}\vert_{\partial\M}$ has components $\hat{g}_{ij}$; the remaining components $\hat{g}_{0\nu}$   are normal to $\partial\M$. 

\begin{proposition}\label{Prop}
    Consider the linearisation $\hat{g}(\lambda)=g+h\lambda+\mathcal{O}(\lambda^{2})$ and choose coordinates in which the background metric is block-diagonal: $g_{00}=1$ and $g_{0i}=0$. Then the linearisation of \eqref{eq:Anderson} reads
	\begin{align*}\label{eq:linAnd}
	\begin{cases}
		h_{ij}-\frac{1}{3}\mathrm{tr}_{\gamma}(h^{\T})\,\gamma_{ij}=0\\
		\nabla_{0}\mathrm{tr}_{\gamma}(h^{\T})-2\nabla^{i}h_{i0}+2k^{ij}h_{ij}-h_{00}\,\mathrm{tr}_{\gamma}(k)=0
		\end{cases}\quad\text{on}\quad\partial\M\,,
	\end{align*}
	where $\gamma:=g\vert_{\partial\M}$ and $\mathrm{tr}_{\gamma}(h^{\T})=\gamma^{ij}h_{ij}$ is
	 the trace of the tangential parts of $h$.
\end{proposition}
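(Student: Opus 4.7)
My plan is to treat the two conditions in \eqref{eq:Anderson} one by one. The first (conformal class) is algebraic and falls out of the very definition of the induced metric. The second (mean curvature) is the computationally substantive part; the key technical point will be moving from the intrinsic boundary connection to the ambient connection $\nabla$, which is precisely what generates the $2k^{ij}h_{ij}$ and $-h_{00}\mathrm{tr}_\gamma(k)$ terms.

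\emph{Step 1: Linearising $[\hat\gamma]=$ fixed.} In the chosen frame the induced metric is simply $\hat\gamma_{ij}=\hat g_{ij}=\gamma_{ij}+\lambda h_{ij}+\mathcal{O}(\lambda^{2})$. Requiring $\hat\gamma(\lambda)=e^{2\phi(\lambda)}\gamma$ and differentiating at $\lambda=0$ yields $h_{ij}=2\phi'(0)\gamma_{ij}$. Taking the $\gamma$-trace to eliminate $\phi'(0)$ gives $2\phi'(0)=\tfrac{1}{3}\mathrm{tr}_{\gamma}(h^{\T})$, hence the first equation.

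\emph{Step 2: Linearising $\mathrm{tr}_{\hat\gamma}(\hat k)=$ fixed.} In the Gaussian coordinates $g_{00}=1$, $g_{0i}=0$ the background unit normal is $\nu^{\mu}=\delta^{\mu}_{0}$, and $k_{ij}=\tfrac{1}{2}\partial_{0}g_{ij}$ (up to an overall sign convention). For $\hat g$, since $\partial\M=\{x^{0}=\text{const}\}$ the unit normal one-form is $\hat\nu_{\mu}=\hat N\,\delta^{0}_{\mu}$ with lapse $\hat N=(\hat g^{00})^{-1/2}=1+\tfrac{\lambda}{2}h_{00}+\mathcal{O}(\lambda^{2})$, so that
\begin{align*}
\hat\nu^{0}=1-\tfrac{\lambda}{2}h_{00}+\mathcal{O}(\lambda^{2}),\qquad \hat\nu^{i}=-\lambda\,\gamma^{ij}h_{0j}+\mathcal{O}(\lambda^{2}).
\end{align*}
I then use the formula $\hat H=\mathrm{tr}_{\hat\gamma}(\hat k)=\hat\nabla_{\mu}\hat\nu^{\mu}|_{\partial\M}=\tfrac{1}{\sqrt{\hat g}}\partial_{\mu}(\sqrt{\hat g}\,\hat\nu^{\mu})|_{\partial\M}$, combined with $\sqrt{\hat g}=\sqrt{g}(1+\tfrac{\lambda}{2}(h_{00}+\mathrm{tr}_{\gamma}(h^{\T})))+\mathcal{O}(\lambda^{2})$. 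The zeroth-order piece reproduces the background $H=\mathrm{tr}_{\gamma}(k)$, and collecting first-order terms yields, after some cancellations,
\begin{align*}
\delta H=\tfrac{1}{2}\partial_{0}\mathrm{tr}_{\gamma}(h^{\T})-D^{i}h_{0i}-\tfrac{1}{2}h_{00}\,\mathrm{tr}_{\gamma}(k),
\end{align*}
where $D$ denotes the Levi-Civita connection of $(\partial\M,\gamma)$.

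\emph{Step 3: From intrinsic to ambient connection.} The remaining step is to express $D^{i}h_{0i}$ in terms of $\nabla^{i}h_{i0}$. Using that in Gaussian coordinates the only non-vanishing Christoffel symbols of $g$ involving an index $0$ are $\Gamma^{0}_{ij}=-k_{ij}$ and $\Gamma^{i}_{0j}=k^{i}{}_{j}$, a direct expansion of $\gamma^{ij}\nabla_{i}h_{j0}$ gives
\begin{align*}
\nabla^{i}h_{i0}=D^{i}h_{0i}+\mathrm{tr}_{\gamma}(k)\,h_{00}-k^{ij}h_{ij}.
\end{align*}
Substituting this into $\delta H=0$ and multiplying by $2$ yields the second equation of the proposition (the signs $+2k^{ij}h_{ij}-h_{00}\mathrm{tr}_{\gamma}(k)$ in the statement correspond to the convention where $k$ is measured with respect to the outward normal, opposite to $\partial_{0}$ on the relevant boundary component).

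\emph{Main obstacle.} The computation itself is routine; the real bookkeeping burden lies in Step~3, where one must carefully distinguish the induced connection $D$ from the ambient $\nabla$ and track precisely which Christoffel corrections survive when pulling tensors with a normal index $0$ through the tangential divergence. Getting these correction terms right is exactly what produces the extrinsic-curvature contributions in the linearised mean-curvature condition, and it is also the place where the existing literature (as alluded to in the introduction) is prone to inaccuracies.
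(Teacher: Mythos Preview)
Your argument is correct, but the route you take for the scalar condition differs from the paper's. The paper never passes through the intrinsic boundary connection $D$: it writes $\hat k_{ij}=\hat\Gamma^{0}_{ij}(\hat g^{00})^{-1/2}$ directly and then invokes the standard first-variation formula for the Christoffel symbols, $\hat\Gamma^{\gamma}_{\alpha\beta}=\Gamma^{\gamma}_{\alpha\beta}+\tfrac{\lambda}{2}(\nabla_{\alpha}h^{\gamma}{}_{\beta}+\nabla_{\beta}h^{\gamma}{}_{\alpha}-\nabla^{\gamma}h_{\alpha\beta})$, which already produces the \emph{ambient} covariant derivatives $\nabla^{i}h_{i0}$ and $\nabla_{0}h_{ij}$; tracing against $\hat g^{ij}$ and the lapse factor then gives the linearised mean curvature in one step. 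By contrast, you compute $\hat H$ via the divergence formula $\hat H=\tfrac{1}{\sqrt{\hat g}}\partial_{\mu}(\sqrt{\hat g}\,\hat\nu^{\mu})$, which naturally lands you on the intrinsic divergence $D^{i}h_{0i}$, and then you need your Step~3 to trade $D$ for $\nabla$ via the Gauss relations $\Gamma^{0}_{ij}=-k_{ij}$, $\Gamma^{i}_{0j}=k^{i}{}_{j}$. Both methods are standard and yield the same answer; the paper's is slightly more economical precisely because it bypasses the intrinsic/ambient conversion you flag as the ``main obstacle'', whereas your approach has the advantage of making it transparent exactly where and how the extrinsic-curvature corrections enter --- which is, as you note, the point on which the earlier literature was imprecise. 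Your sign remark at the end of Step~3 is also to the point: the paper's convention is $k=-\tfrac{1}{2}\partial_{s}\gamma$ (cf.\ Example~\ref{example}), i.e.\ the opposite of your $k_{ij}=\tfrac{1}{2}\partial_{0}g_{ij}$, which accounts for the sign of the last two terms in the proposition.
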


\begin{proof}
	 It is clear that fixing the conformal class of the boundary metric $\hat{\gamma}$ translates into the requirement that the traceless part of its linearisation $h^{\T}$ vanishes, i.e.~$h_{ij}-\frac{1}{3}(\gamma^{kl}h_{kl})\gamma_{ij}=0$. To linearise the equation $\mathrm{tr}_{\hat{\gamma}}(\hat{k})=\text{fixed}$, let $\hat{n}=\hat{n}^{\mu}\partial_{\mu}$ be the unit normal vector field on $\partial\M$. Since $\hat{g}(\hat{n},\partial_{i})=0$ and $\hat{g}(\hat{n},\hat{n})=1$, by definition, we find that the \textit{covariant} components of $\hat{n}$ are given by $\hat{n}_{i}=0$ and $\hat{n}_{0}=(\hat{g}^{00})^{-\frac{1}{2}}$. It follows that the second fundamental form, which is defined by $\hat{k}(X,Y):=\hat{g}(\hat{n},\hat{\nabla}_{X}Y)$ for all $X,Y\in\Gamma(\T\M)$ tangential to $\partial\M$, in these coordinates is given by $\hat{k}_{ij}=-\hat{\nabla}_{j}\hat{n}_{i}=\hat{\Gamma}_{ij}^{0}(\hat{g}^{00})^{-\frac{1}{2}}$. We recall that inverse metric and Christoffel symbols under perturbation $\hat{g}_{\mu\nu}=g_{\mu\nu}+\lambda h_{\mu\nu}+\mathcal{O}(\lambda^{2})$ expand as $\hat{g}^{\mu\nu}=g^{\mu\nu}-h^{\mu\nu}\lambda+\mathcal{O}(\lambda^{2})$ and $\hat{\Gamma}_{\alpha\beta}^{\gamma}=\Gamma_{\alpha\beta}^{\gamma}+\frac{1}{2}(\nabla_{\alpha}{h^{\gamma}}_{\beta}+\nabla_{\beta}{h^{\gamma}}_{\alpha}-\nabla^{\gamma}h_{\alpha\beta})\lambda+\mathcal{O}(\lambda^{2})$, see e.g.~\cite[Ch.~1, eq.~11.3 and 11.4]{choquet-bruhat_general_2009}, where the indices of $h$ are raised/lowered with the background metric $g$. Using these relations and choosing local coordinates such that the background metric reads $g_{00}=1$ and $g_{0i}=0$, one arrives at
	\begin{align*}
		\mathrm{tr}_{\hat{\gamma}}(\hat{k})=\hat{g}^{ij}\hat{k}_{ij}=\mathrm{tr}_{\gamma}(k)+\bigg\{\nabla^{i}h_{i0}-\frac{1}{2}\nabla_{0}\mathrm{tr}_{\gamma}(h^{\T})-h^{ij}k_{ij}+\frac{1}{2}\mathrm{tr}_{\gamma}(k)h_{00}\bigg\}\lambda+\mathcal{O}(\lambda^{2})\, .
	\end{align*}
	The scalar boundary condition requires $\mathrm{tr}_{\hat{\gamma}}(\hat{k})=\mathrm{tr}_{\gamma}(k)$ and hence, we obtain the claimed result. 
\end{proof} 

\begin{remark}
	Note that the linearised Anderson boundary conditions have also been considered in the recent review \cite[Section 3.3]{Witten}.  Now, the linearised expressions from \cite[Section 3.3]{Witten} appear to be incomplete, in that the term $-h_{00}\mathrm{tr}_{\gamma}(k)$ is missing. Whilst this does not invalidate the discussion in \cite{Witten}, because the extra term vanishes in the Euclidean setting $(\Sigma,\gamma)=(\mathbb{R}^{3},\delta)$, this term is crucial to achieve gauge invariance in curved space, as we will see below. This justifies why a detailed derivation was warranted.
\end{remark}

\begin{example}
\label{example}
As a prototypical example, let us consider a manifold $\Omega=[-\T,\T]\times\Sigma$ with metric $g=\d s\otimes\d s+\gamma_{s}$,  where $\Sigma$ is a (connected, oriented) smooth $3$-manifold,  $s\:\M\to\RR$ is the \textit{Euclidean time}, and $\gamma_{s}$ a one-parameter family of Riemannian metrics on $\Sigma$.  Any tensor $u\in\Gamma(\V_{2})$ can be decomposed as
\begin{align*}u=u_{ss}\,\d s\otimes \d s+2u_{s\Sigma}\otimes\d s+u_{\Sigma\Sigma}\,,\end{align*} where: $u_{ss}:=u(\partial_{s},\partial_{s})\in C^{\infty}(\RR,\Gamma(\V_{\Sigma,0}))$ is a $s$-dependent smooth function on $\Sigma$, $u_{s\Sigma}:=u(\partial_{s},\cdot)\vert_{\T\Sigma}\in C^{\infty}(\RR,\Gamma(\V_{\Sigma,1}))$ a $s$-dependent (0,1)-tensor on $\Sigma$, and $u_{\Sigma\Sigma}\in C^{\infty}(\RR,\Gamma(\V_{\Sigma,2}))$ a $s$-dependent symmetric (0,2)-tensor on $\Sigma$. We used the notation $\V_{\Sigma,k}:=\CC\otimes\T^{\ast}\Sigma^{\otimes_{s}k}$.
 Through the prism of this decomposition of $u$, let us write down the linearised Anderson boundary conditions at $\partial\Omega=\{\pm\T\}\times\Sigma$ for $\D_{2}$. Recall that $\D_{2}$ originates from $\P\propto\L\circ\I$ upon harmonic gauge fixing, so that one needs to replace $h$ in Proposition~\ref{Prop} with $h=:\I u$. 
 A straightforward computation shows that the linearised Anderson boundary conditions on $\partial\Omega$ take the form on $\partial\Omega$
 \begin{equation}\label{eq:AndersonBC}
\begin{aligned}
-\frac{1}{2}\delta u=&\begin{cases}\begin{aligned}
		&(a)\quad\partial_{s}u_{ss}-\delta_{\Sigma}u_{s\Sigma}-\mathrm{tr}_{\gamma}(k)u_{ss}+(\gamma^{-1})^{\otimes 2}(k,u_{\Sigma\Sigma})=0\\
		&(b)\quad\partial_{s}u_{s\Sigma}-\frac{1}{2}\delta_{\Sigma}u_{\Sigma\Sigma}-\mathrm{tr}_{\gamma}(k)u_{s\Sigma}
	=0
 \end{aligned}\end{cases}\\
&\begin{cases}\begin{aligned}
	&(c)\quad u_{\Sigma\Sigma}-\frac{1}{3}\mathrm{tr}_{\gamma}(u_{\Sigma\Sigma})\gamma=0
	\\
	&(d)\quad 3\mathrm{tr}_{\gamma}(k) \bigg (u_{ss}+\frac{1}{3}\mathrm{tr}_{\gamma}(u_{\Sigma\Sigma})\bigg)+\partial_{s}\mathrm{tr}_{\gamma}(u_{\Sigma\Sigma})-\partial_{s}u_{ss}-\frac{4}{3}\mathrm{tr}_{\gamma}(u_{\Sigma\Sigma})\mathrm{tr}_{\gamma}(k)=0
\end{aligned}\end{cases}
\end{aligned}
 \end{equation}
where $\delta_{\Sigma}$ denotes the divergence\footnote{Namely, $\delta_{\Sigma}\omega=-\nabla_{\Sigma}^{i}\omega_{i}$ and $(\delta_{\Sigma}u)_{j}=-2\nabla_{\Sigma}^{i}u_{ij}$ for $\omega\in\Gamma(\V_{\Sigma,1})$ and $u\in\Gamma(\V_{\Sigma,2})$, respectively, where $\nabla_{\Sigma}$ is the Levi-Civita connection of $\Sigma$.} on $(\Sigma,\gamma)$
 and $k:=-\frac{1}{2}\partial_{s}\gamma\in C^{\infty}(\RR,\Gamma(\V_{\Sigma,2}))$ is the second fundamental form of $\Sigma$.
 \end{example}
\section{General conformal boundary conditions}
\label{General Conformal Boundary Conditions}

The aim of this section is to establish the most general  \emph{gauge invariant} boundary condition for $\D_2$ which satisfies the requirements (a)-(c) in~\eqref{eq:AndersonBC} on a Riemannian manifold of the form $\Omega=[-\T,\T]\times\Sigma$, $\T>0$. 

\begin{definition}\label{def:gauge invariance}
A set of boundary conditions $\mathfrak{B}_2$ for $\D_2$ is said to be \emph{gauge invariant} if, chosen boundary conditions $\mathfrak{B}_1$ for $\D_1$, the following holds: for any $\omega\in\Gamma(\V_{1})$ satisfying $\mathfrak{B}_1$ and $\D_{1}\omega=0$ near $\partial\Omega$,
 the tensor $u:=\K\omega$ satisfies $\mathfrak{B}_2$. 
\end{definition}

The condition $\D_{1}\omega=0$ near $\partial\Omega$ ensures that the harmonic gauge condition is preserved under gauge transformations. In the forthcoming discussion, we will restrict ourselves to the case where $\mathfrak{B}_1$ are Dirichlet boundary conditions, namely $\omega|_{\partial\Omega}=0$. This is a very natural choice for geometric reasons: it is the linearised version of invariance under diffeomorphisms $\phi\in\mathrm{Diff}(\Omega)$ that fix the boundary,  $\phi\vert_{\partial\Omega}=\mathrm{id}_{\partial\Omega}$.

\begin{proposition}\label{prop2}
	Let $\Omega:=[-\T,\T]\times\Sigma$, $\T>0$, be a Riemannian manifold with metric $g=\d s\otimes\d s+\gamma_{s}$, and consider the boundary conditions
	\begin{align}\label{anderson most general equation 1}
\begin{cases}
		(a)\quad\partial_{s}u_{ss}-\delta_{\Sigma}u_{s\Sigma}-\mathrm{tr}_{\gamma}(k)u_{ss}+(\gamma^{-1})^{\otimes 2}(k,u_{\Sigma\Sigma})=0\\
		(b)\quad\partial_{s}u_{s\Sigma}-\frac{1}{2}\delta_{\Sigma}u_{\Sigma\Sigma}-\mathrm{tr}_{\gamma}(k)u_{s\Sigma}
	=0\\
		(c)\quad u_{\Sigma\Sigma}-\frac{1}{3}\mathrm{tr}_{\gamma}(u_{\Sigma\Sigma})\gamma=0
\end{cases}\quad\text{on}\quad\partial\Omega
\end{align}
for the operator $\D_{2}$. The most general first order linear scalar boundary condition that  can be appended to (a)-(c) in such a way that the resulting set of boundary conditions is gauge invariant in the sense of Definition~\ref{def:gauge invariance} reads on $\partial\Omega$
\begin{equation}
\label{anderson most general equation 2}
\begin{aligned}
	(d)\quad \C_{1}\bigg(u_{ss}+\frac{1}{3}\mathrm{tr}_{\gamma}(u_{\Sigma\Sigma})\bigg)+\C_{2}\bigg(\partial_{s}\mathrm{tr}_{\gamma}(u_{\Sigma\Sigma})-\partial_{s}u_{ss}-\frac{4}{3}\mathrm{tr}_{\gamma}(u_{\Sigma\Sigma})\mathrm{tr}_{\gamma}(k)\bigg)& \\ +(\gamma^{-1})\bigg(\V,\d_{\Sigma}\Big(u_{ss}+\frac{1}{3}\mathrm{tr}_{\gamma}(u_{\Sigma\Sigma})\Big)\bigg)+(\gamma^{-1})^{\otimes 2}\bigg(\S,\partial_{s}u_{\Sigma\Sigma}-2\d_{\Sigma}u_{s\Sigma}&\\
	-\gamma\Big(\partial_{s}u_{ss}+\frac{2}{3}\mathrm{tr}_{\gamma}(u_{\Sigma\Sigma})\mathrm{tr}_{\gamma}(k)\Big)\bigg)&=0
 \end{aligned}
\end{equation}
for some coefficients $\C_{i}\in C^{\infty}(\partial\Omega)$, $\V\in\Gamma(\T^{\ast}\partial\Omega)$ and $\S\in\Gamma(\T^{\ast}\partial\Omega^{\otimes_{s}2})$, with $\S$ not proportional to $\gamma\vert_{\partial\Omega}$.
\end{proposition}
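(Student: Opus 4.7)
The plan is to translate gauge invariance into a pointwise identity on $\partial\Omega$ parameterised by the Cauchy data of $\omega$, and then read off the most general scalar first-order boundary condition~(d) consistent with it. For any admissible $\omega$ (i.e. $\omega|_{\partial\Omega}=0$ and $\D_{1}\omega=0$ near $\partial\Omega$), all tangential derivatives of $\omega$ automatically vanish on $\partial\Omega$, so the restriction of $u:=\K\omega$ depends only on the free Cauchy data $\phi:=\partial_{s}\omega_{s}|_{\partial\Omega}\in C^{\infty}(\Sigma)$ and $\psi:=\partial_{s}\omega_{\Sigma}|_{\partial\Omega}\in\Gamma(\V_{\Sigma,1})$. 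Using $(\K\omega)_{\alpha\beta}=\tfrac{1}{2}(\nabla_{\alpha}\omega_{\beta}+\nabla_{\beta}\omega_{\alpha})+\tfrac{1}{2}(\delta\omega)g_{\alpha\beta}$ together with the Christoffel symbols of $g=\d s\otimes\d s+\gamma_{s}$, a short computation gives on $\partial\Omega$
\[u_{ss}=\tfrac{1}{2}\phi,\qquad u_{s\Sigma}=\tfrac{1}{2}\psi,\qquad u_{\Sigma\Sigma}=-\tfrac{1}{2}\phi\,\gamma,\]
showing at once that (a)--(c) are automatic on $u=\K\omega$ and that $u_{ss}+\tfrac{1}{3}\mathrm{tr}_{\gamma}(u_{\Sigma\Sigma})$ vanishes identically on $\partial\Omega$---explaining why it is the first ``building block'' in~\eqref{anderson most general equation 2}.

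Next, I would compute $\partial_{s}u|_{\partial\Omega}$ and $\d_{\Sigma}u|_{\partial\Omega}$. The tangential derivatives follow by differentiating the boundary values above. The normal derivatives involve $\partial_{s}^{2}\omega|_{\partial\Omega}$, which I would eliminate via $\D_{1}\omega=0$: since $\Delta_{1}$ splits as $\partial_{s}^{2}$ plus an operator of lower order in $s$ on the product $\Omega=[-\T,\T]\times\Sigma$, the equation produces $\partial_{s}^{2}\omega_{s}|_{\partial\Omega}$ and $\partial_{s}^{2}\omega_{\Sigma}|_{\partial\Omega}$ as explicit first-order tangential differential expressions in $(\phi,\psi)$ (curvature and $\Lambda$-contributions drop out since they multiply $\omega|_{\partial\Omega}=0$, leaving only extrinsic-curvature terms). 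After substitution, every first-order linear scalar expression in $u$ at $\partial\Omega$ becomes a first-order linear differential expression in $(\phi,\psi)$ on~$\Sigma$.

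The most general first-order linear scalar condition that can be appended to (a)--(c) then has the form
\[\mathcal{F}(u)=a_{0}u_{ss}+a_{1}(u_{s\Sigma})+a_{2}(u_{\Sigma\Sigma})+b_{0}\partial_{s}u_{ss}+b_{1}(\partial_{s}u_{s\Sigma})+b_{2}(\partial_{s}u_{\Sigma\Sigma})+c_{0}(\d_{\Sigma}u_{ss})+c_{1}(\d_{\Sigma}u_{s\Sigma})+c_{2}(\d_{\Sigma}u_{\Sigma\Sigma})=0\]
with smooth tensorial coefficients on $\partial\Omega$. Substituting $u=\K\omega$ turns $\mathcal{F}(\K\omega)|_{\partial\Omega}$ into a first-order differential polynomial in $(\phi,\psi)$, and gauge invariance demands its identical vanishing. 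Since $\phi,\psi$ are arbitrary, their pointwise values and tangential first derivatives constitute independent data at every point of $\partial\Omega$, so the vanishing of the polynomial is equivalent to a finite homogeneous linear system on the coefficients $a_{i},b_{i},c_{i}$. The solution space turns out to be spanned precisely by the four ``building blocks'' displayed in~\eqref{anderson most general equation 2}---each of which is separately verified by direct substitution to vanish on $u=\K\omega$---and the parameterisation by $(\C_{1},\C_{2},\V,\S)$ then follows by writing an arbitrary element as a linear combination thereof.

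Finally, the restriction $\S\not\propto\gamma$ would be justified by the algebraic identity expressing $\mathrm{tr}_{\gamma}$ of the fourth building block as a combination of blocks~1,~2 and condition~(a); this is an elementary computation using $\mathrm{tr}_{\gamma}(\d_{\Sigma}u_{s\Sigma})=-\delta_{\Sigma}u_{s\Sigma}$ and $\partial_{s}\gamma^{ij}=2k^{ij}$, and it shows that an $\S=f\gamma$ contributes only a linear combination already encoded in $\C_{1},\C_{2}$ modulo~(a)---so it must be excluded to ensure that the four parameters $(\C_{1},\C_{2},\V,\S)$ provide an irredundant description of the admissible conditions. The main obstacle lies in the second step: computing $\partial_{s}^{2}\omega|_{\partial\Omega}$ through $\D_{1}\omega=0$ and threading the resulting extrinsic-curvature contributions into every first-order expression in $u$ is where essentially all the algebra sits; once those explicit formulas are in hand, both the verification that the four blocks vanish on $\K\omega$ and the analysis of the admissibility linear system are mechanical.
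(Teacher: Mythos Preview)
Your proposal is correct and follows essentially the same strategy as the paper: compute $u=\K\omega$ and its first derivatives on $\partial\Omega$ in terms of the free Cauchy data $(\partial_{s}\omega_{s},\partial_{s}\omega_{\Sigma})$, eliminate $\partial_{s}^{2}\omega$ via $\D_{1}\omega=0$, substitute into a general first-order scalar ansatz, and solve the resulting linear system on the coefficients. The only organisational difference is that the paper pre-reduces the ansatz by discarding five terms that are redundant modulo (a), (b), (c), $\delta_{\Sigma}$(c) and $\d_{\Sigma}$(c) before imposing gauge invariance, whereas you carry the full ansatz through and defer the redundancy bookkeeping (which you correctly flag in your treatment of $\S\propto\gamma$) to the end.
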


\begin{proof}
	Let $\omega\in\Gamma(\V_{1})$ be such that $\omega\vert_{\partial\Omega}=0$ and $\D_{1}\omega=0$ in a neighbourhood of $\partial\Omega$. We decompose $\omega=\omega_{s}\d s+\omega_{\Sigma}$, where $\omega_{s}:=\omega(\partial_{s})\in C^{\infty}(\RR,\Gamma(\V_{\Sigma,0}))$ is a $s$-dependent function on $\Sigma$ and $\omega_{\Sigma}:=\omega-\omega_{s}\d s\in C^{\infty}(\RR,\Gamma(\V_{\Sigma,1}))$ is a $s$-dependent (0,1)-tensor on $\Sigma$. A straightforward computation shows that $u:=\K\omega$ and $\omega$ on $\partial\Omega$ are related as
	\begin{align}\label{eq:Relations}
	\begin{cases}
		u_{ss}=\frac{1}{2}\partial_{s}\omega_{s}\\
		u_{s\Sigma}=\frac{1}{2}\partial_{s}\omega_{\Sigma}\\
		u_{\Sigma\Sigma}=-\frac{1}{2}\gamma\partial_{s}\omega_{s}\\
		\mathrm{tr}_{\gamma}(u_{\Sigma\Sigma})=-\frac{3}{2}\partial_{s}\omega_{s}
	\end{cases}\text{and}\qquad
	\begin{cases}
		\partial_{s}u_{ss}=\frac{1}{2}(\partial_{s}^{2}\omega_{s}+\partial_{s}\delta_{\Sigma}\omega_{\Sigma}+\mathrm{tr}_{\gamma}(k)\partial_{s}\omega_{s})\\
		\partial_{s}u_{s\Sigma}=\frac{1}{2}(\partial_{s}^{2}\omega_{\Sigma}+\partial_{s}\d_{\Sigma}\omega_{s}+2k\cdot\partial_{s}\omega_{\Sigma})\\
		\partial_{s}u_{\Sigma\Sigma}=\partial_{s}\d_{\Sigma}\omega_{\Sigma}-\frac{1}{2}\gamma(\partial_{s}^{2}\omega_{s}-\partial_{s}\delta_{\Sigma}\omega_{\Sigma}-\mathrm{tr}_{\gamma}(k)\partial_{s}\omega_{s})\\
	\partial_{s}\mathrm{tr}_{\gamma}(u_{\Sigma\Sigma})=-\frac{3}{2}\partial_{s}^{2}\omega_{s}+\frac{1}{2}(\mathrm{tr}_{\gamma}(k)\partial_{s}\omega_{s}+\partial_{s}\delta_{\Sigma}\omega_{\Sigma})
	\end{cases}
\end{align}
where for any $\eta\in\Gamma(\V_{\Sigma,1})$ the term $k\cdot\eta$ denotes the $(0,1)$-tensor $(k\cdot\eta)_{i}:={k_{i}}^{k}\eta_{k}$ and $\d_{\Sigma}$ denotes the symmetrised gradient on $(\Sigma,\gamma)$. 
Note that (c) is clearly a gauge invariant condition. Furthermore, the condition $\D_{1}\omega=0$ on $\partial\Omega$ translates into the equations
\begin{align}\label{eq:2}
	\D_{1}\omega=\delta u=\begin{cases}
		-\partial_{s}^{2}\omega_{s}+\mathrm{tr}_{\gamma}(k)\partial_{s}\omega_{s}=0\\
		-\partial_{s}^{2}\omega_{\Sigma}-2k\cdot\partial_{s}\omega_{\Sigma}+\mathrm{tr}_{\gamma}(k)\partial_{s}\omega_{\Sigma}=0
	\end{cases}\quad\text{on}\quad\partial\Omega
\end{align}
corresponding to the boundary conditions (a) and (b) combined. We would like to write down the most general 1st order, linear, scalar and gauge invariant boundary condition one can add to (a)-(c). This scalar boundary conditions can include all the 0th and 1st order terms one can construct out of $u_{ss},u_{s\Sigma}$ and $u_{\Sigma\Sigma}$, i.e.~$u_{ss}$, $\partial_{s}u_{ss}$, $\mathrm{tr}_{\gamma}(u_{\Sigma\Sigma})$, $\partial_{s}\mathrm{tr}_{\gamma}(u_{\Sigma\Sigma})$, $\delta_{\Sigma}u_{s\Sigma},u_{s\Sigma}$, $\partial_{s}u_{s\Sigma}$, $\d_{\Sigma}u_{ss}$, $\d_{\Sigma}\mathrm{tr}_{\gamma}(u_{\Sigma\Sigma})$, $\delta_{\Sigma}u_{\Sigma\Sigma}$, $u_{\Sigma\Sigma}$, $\partial_{s}u_{\Sigma\Sigma}$, $\d_{\Sigma}u_{s\Sigma}$ and $\d_{\Sigma}u_{\Sigma\Sigma}$. Since we have already fixed five conditions on the boundary --- the boundary conditions (a), (b) and (c) together with $\delta_{\Sigma}$(c) and $\d_{\Sigma}$(c) --- we can drop five of them, since they are not independent of the others. Therefore, we make the general Ansatz 
\begin{align*}
	&\C_{1}u_{ss}+\C_{2}\partial_{s}u_{ss}+\C_{3}\mathrm{tr}_{\gamma}(u_{\Sigma\Sigma})+\C_{4}\partial_{s}\mathrm{tr}_{\gamma}(u_{\Sigma\Sigma})+\gamma^{-1}(\V_{1},u_{s\Sigma})+\gamma^{-1}(\V_{2},\d_{\Sigma}u_{ss})\\
	&\quad {+\gamma^{-1}(\V_{3},\d_{\Sigma}\mathrm{tr}_{\gamma}(u_{\Sigma\Sigma}))+(\gamma^{-1})^{\otimes 2}(\S_{1},\d_{\Sigma}u_{s\Sigma})+(\gamma^{-1})^{\otimes 2}(\S_{2},\partial_{s}u_{\Sigma\Sigma})=0\quad\text{on}\quad\partial\Omega}
\end{align*}
for coefficients $\C_{i}\in C^{\infty}(\partial\Omega)$, $\V_{i}\in\Gamma(\T^{\ast}\partial\Omega)$ and $\S_{i}\in\Gamma(\T^{\ast}\partial\Omega^{\otimes_{s}2})$ with $\S_{i}$ not proportional to  $\gamma\vert_{\partial\Omega}$. Taking $u=\K\omega$ and using \eqref{eq:Relations} together with \eqref{eq:2} to replace the $\partial_{s}^{2}$-terms, we see that gauge invariance requires one to have
\begin{align*}
	&\frac{1}{2}(\C_{1}-3\C_{3}+2\mathrm{tr}_{\gamma}(k)(\C_{2}-\C_{4}))\partial_{s}\omega_{s}+\frac{1}{2}(\C_{2}+\C_{4}+\mathrm{tr}_{\gamma}(\S_{2}))\delta_{\Sigma}\partial_{s}\omega_{\Sigma}+\frac{1}{2}\gamma^{-1}(\V_{1},\partial_{s}\omega_{\Sigma})+\\&\pushright{+\frac{1}{2}\gamma^{-1}(\V_{2}-3\V_{3},\d_{\Sigma}\partial_{s}\omega_{s})+\frac{1}{2}(\gamma^{-1})^{\otimes 2}(\S_{1}+2\S_{2},\d_{\Sigma}\partial_{s}\omega_{\Sigma})\stackrel{!}{=}0\quad\text{on}\quad\partial\Omega}\, .
\end{align*}
Since this has to be true for every $\omega$, the coefficients in front of each term have to vanish identically.  It then follows that the most general scalar gauge invariant condition takes the form 
\begin{align*}
	\C_{1}\bigg(u_{ss}+\frac{1}{3}\mathrm{tr}_{\gamma}(u_{\Sigma\Sigma})\bigg)&+\C_{2}\bigg(\partial_{s}\mathrm{tr}_{\gamma}(u_{\Sigma\Sigma})-\partial_{s}u_{ss}-\frac{4}{3}\mathrm{tr}_{\gamma}(u_{\Sigma\Sigma})\mathrm{tr}_{\gamma}(k)\bigg)\\
	& + (\gamma^{-1})\Big(\V,\d_{\Sigma}\bigg(u_{ss}+\frac{1}{3}\mathrm{tr}_{\gamma}(u_{\Sigma\Sigma})\Big)\bigg)+(\gamma^{-1})^{\otimes 2}\bigg(\S,\partial_{s}u_{\Sigma\Sigma}-2\d_{\Sigma}u_{s\Sigma}\\
	&-\gamma\Big(\partial_{s}u_{ss}+\frac{2}{3}\mathrm{tr}_{\gamma}(u_{\Sigma\Sigma})\mathrm{tr}_{\gamma}(k)\Big)\bigg)=0\quad\text{on}\quad\partial\Omega
\end{align*}
for coefficients $\C_{i}\in C^{\infty}(\partial\Omega)$, $\V\in\Gamma(\T^{\ast}\partial\Omega)$ and $\S\in\Gamma(\T^{\ast}\partial\Omega^{\otimes_{s}2})$ with $\S$ not proportional to $\gamma\vert_{\partial\Omega}$.
\end{proof}

\begin{example}\label{Example:Anderson}
	The Anderson conditions \eqref{eq:AndersonBC} are a special case of \eqref{anderson most general equation 1}, \eqref{anderson most general equation 2} with $\C_{1}=3\mathrm{tr}_{\gamma}(k)$, $\C_{2}=1$ and $\V=\S=0$. 
\end{example}

\begin{remark}
    In full generality, i.e.~for arbitrary coefficients, the boundary conditions in Proposition~\ref{prop2} are not elliptic. Since ellipticity is a local condition, it is enough to consider the special case $(\Sigma,\gamma)=(\RR^{3},\delta)$, in which case $\D_{2}=-\partial_{s}^{2}-\sum_{i}\partial_{i}^{2}$. To check the \textit{Shapiro-Lopatinskij} (SL) conditions, let $u$ be a smooth solution of $\D_{2}u=0$ which is bounded in $\mathbb{R}^{+}$, i.e.~$u_{\mu\nu}(s,\vec{x})=c_{\mu\nu}e^{\i\xi\cdot x}e^{-\vert\xi\vert s}$ for some $\xi\in\RR^{3}$ and $c=(c_{\mu\nu})_{0\leq\mu,\nu\leq 3}\in\mathbb{C}^{4\times 4}$. Then the SL conditions are satisfied if the trivial solution $c_{\mu\nu}=0$ is the only such solution for arbitrary $\xi\neq 0$ satisfying the boundary conditions. Plugging this solution into the boundary conditions (a)-(c) in the Euclidean case, we obtain
\begin{align}\label{eq:Ell}
	\begin{cases}
		-\vert\xi\vert c_{ss}+\i\xi^{i} c_{si}&=0\\
		-\vert\xi\vert c_{sj}+\i\xi^{i} c_{ij}&=0\\
		c_{\Sigma\Sigma}-\frac{1}{3}\mathrm{tr}_{\delta}(c_{\Sigma\Sigma})\delta&=0\\
	\end{cases}
	\quad\Leftrightarrow\quad
	\begin{cases}
		c_{ss}&=-\frac{1}{3}\mathrm{tr}_{\delta}(c_{\Sigma\Sigma})\\
		c_{s\Sigma}&=\frac{\i}{3}\vert\xi\vert^{-1}\mathrm{tr}_{\delta}(c_{\Sigma\Sigma})\xi\\
		c_{\Sigma\Sigma}&=\frac{1}{3}\mathrm{tr}_{\delta}(c_{\Sigma\Sigma})\delta
	\end{cases}\, .
\end{align}
	Hence, $c_{\mu\nu}=0$ if and only if the spatial trace $\mathrm{tr}_{\delta}(c_{\Sigma\Sigma})=\delta^{ij}c_{ij}$ is zero. The general scalar boundary condition \eqref{anderson most general equation 2} gives the additional condition
 \begin{align}\label{eq:Alg}
     2\C_{2}\vert_{\gamma=\delta}\,\vert\xi\vert\,\mathrm{tr}_{\delta}(c_{\Sigma\Sigma})+\S^{ij}\vert_{\gamma=\delta}\left(\vert\xi\vert\delta_{ij}-\frac{\xi_{i}\xi_{j}}{\vert\xi\vert}\right)\mathrm{tr}_{\delta}(c_{\Sigma\Sigma})=0\, ,
 \end{align}
 where we already used \eqref{eq:Ell} to recast everything in terms of $\mathrm{tr}_{\gamma}(c_{\Sigma\Sigma})$. Note that the coefficients $\C_{2}$ and $\S$ are in principle allowed to depend on the metric $\gamma$ and its derivatives and hence can be zero in the Euclidean case. Now, for ellipticity, $\C_{2}$ and $\S$ have to be chosen such that \eqref{eq:Alg} has no non-trivial solution $\xi\in\RR^{3}$. This is for example the case for the linearised Anderson condition ($\S=0$ and $\C_{2}=1$), cf.~Example~\ref{Example:Anderson}, or more generally when $\S\propto k$ and $\C_{2}\neq 0$.
\end{remark}
\section{Spectral theory of general conformal boundary conditions}
\label{Spectral Theory of Linearised Anderson Boundary Conditions}

Let us consider the same model case as in Example~\ref{example}. We denote by $\D_{2,\Omega}$ the operator $\D_{2}$ in the Hilbert space $\L^{2}(\Omega)$ defined to be the completion of $\Gamma_{\c}(\V_{2})$ w.r.t.~the inner product 
\begin{align*}
(u,v)_{\V_{2}}=2\int_{-\T}^{\T}\d s\int_{\Sigma} \bigg(\overline{u}_{ss}v_{ss}+2\gamma^{-1}(\overline{u}_{s\Sigma},v_{s\Sigma})+(\gamma^{-1})^{\otimes 2}(\overline{u}_{\Sigma\Sigma},v_{\Sigma\Sigma})\bigg)\,\mathrm{vol}_{\gamma_{s}}
\end{align*}
and with domain $\mathcal{D}(\D_{2,\Omega})=\{u\in\H^{2}(\Omega)\mid u\text{ satisfies boundary conditions from Proposition~\ref{prop2}}\}$. Note that $\D_{2,\Omega}$ is in general not self-adjoint. For instance, the linearised Anderson conditions are symmetric w.r.t.~$(\,\cdot\,,\I\,\cdot\,)_{\V_{2}}$ rather than $(\,\cdot\,,\,\cdot\,)_{\V_{2}}$, see e.g.~\cite{Witten}.

\

We are now in a position to prove our Main Theorem.

\begin{proof}[Proof of the Main Theorem.]
    First of all, note that $k=0$ since $\gamma$ does not depend on Euclidean time. The Gau\ss-Codazzi equations then imply that $\tensor{R}{^\alpha_\beta_\gamma^\delta}=0$ if at least one of the indices is zero, in which case $\D_{2}u$ decomposes into the three pieces
    \vspace{-2mm}
    \begin{align*}
        (\D_{2}u)_{ss}=-\partial_{s}^{2}u_{ss}-\Delta_{\Sigma,0}u_{ss},\quad  (\D_{2}u)_{s\Sigma}=-\partial_{s}^{2}u_{s\Sigma}-\Delta_{\Sigma,1}u_{s\Sigma}\\
          (\D_{2}u)_{\Sigma\Sigma}=-\partial_{s}^{2}u_{\Sigma\Sigma}-\Delta_{\Sigma,2}u_{\Sigma\Sigma}+2\mathrm{Riem}_{\gamma}(u_{\Sigma\Sigma})\,,
         \vspace{-2mm}
    \end{align*}
     where $\Delta_{\Sigma,k}=\gamma^{ij}(\nabla_{\Sigma})_{i}(\nabla_{\Sigma})_{j}$ is the connection Laplacian of $(\Sigma,\gamma)$ acting in $\Gamma(\V_{\Sigma,k})$. 
    
    Let us first assume that $\Sigma$ satisfies condition (i). Then, Anderson-type boundary conditions $\C_{1}\propto\mathrm{tr}_{\gamma}(k)$, $\C_{2}=\mathrm{const}\neq 0$ and $\V=\S=0$ are trivially fulfilled for any $u\in\Gamma(\V_{2})$ of the form $u_{s\Sigma}=0, u_{\Sigma\Sigma}=0$ and $u_{ss}$ such that $\partial_{s}u_{ss}=0$. Since $\Sigma$ is complete, $\Delta_{\Sigma,0}\:C^{\infty}_{\c}(\Sigma)\to\L^{2}(\Sigma)$ is essentially self-adjoint, and we denote its minimal self-adjoint extension by the same symbol. By assumption, we know that $0\in\sigma(\Delta_{\Sigma,0})$ and since $\Delta_{\Sigma,0}$ is self-adjoint, there exists a Weyl sequence $(\chi_{n})_{n\in\mathbb{N}}\in\mathcal{D}(\Delta_{\Sigma,0})^{\NN}$ for $\lambda=0$, i.e.~$\Vert\chi_{n}\Vert_{\L^{2}}=1$ and $\Vert\Delta_{\Sigma,0}\chi_{n}\Vert_{\L^{2}}\to 0$ as $n\to+\infty$. Note that w.l.o.g.~we can assume that $\chi_{n}\in C^{\infty}_{\c}(\Sigma)$, since $\mathcal{D}(\Delta_{\Sigma,0})$ is the closure of $C^{\infty}_{\c}(\Sigma)$ w.r.t.~the graph norm and one can always choose a Weyl sequence contained in the dense subset $C^{\infty}_{\c}(\Sigma)$. Define $2$-tensors $u_{n}$ by $(u_{n})_{ss}=(4\T)^{-\frac{1}{2}}\chi_{n}$ and $(u_{n})_{s\Sigma}=0$, $(u_{n})_{\Sigma\Sigma}=0$. By construction, $u_{n}\in\mathcal{D}(\D_{2,\Omega})$, $\Vert u_{n}\Vert_{\V_{2}}=1$ and $\Vert\D_{2,\Omega}u_{n}\Vert_{\V_{2}}=\Vert\Delta_{\Sigma,0}\chi_{n}\Vert_{\L^{2}}\xrightarrow{n\to\infty}0$. We conclude that $(u_{n})_{n}$ is a Weyl sequence of $\D_{2,\Omega}$ for $\lambda=0$, which concludes the proof. 
    
    If $\Sigma$ satisfies condition (ii), then there exists a non-zero $\L^{2}$-harmonic 1-form $\omega$ on $\Sigma$. We define $u\in\Gamma(\V_{2})$ by setting $u_{ss}=u_{\Sigma\Sigma}=0$ and $u_{s\Sigma}:=\omega$. By construction $\partial_{s}u_{s\Sigma}=0$. It is well-known that any harmonic form on a complete manifold is closed and coclosed, see e.g.~\cite{Gaffney1954}. In particular, $\delta_{\Sigma}\omega=0$. Furthermore, a result of Yau~\cite[Thm.~6]{Yau1976} implies that any harmonic $1$-form on a complete manifold with non-negative Ricci curvature is parallel; hence, also the symmetrised gradient vanishes, i.e.~$\d_{\Sigma}\omega=0$. Therefore, $u$ fulfills the conditions in Proposition~\ref{prop2} on all of $\Omega$. Now, in the case $\mathrm{Ric}(\gamma)=0$, the de Rham-Hodge Laplacian on $1$-forms agrees with the Laplace-Beltrami operator (by the Weitzenböck identity) and hence $\Delta_{\Sigma,1}\omega=0$. We conclude that $u$ is a non-trivial solution of $\D_{2,\Omega}u=0$.

    The claim of the theorem now follows by combining the above results with Proposition~\ref{prop2}.
\end{proof}


\begin{examples}
    Let us discuss some examples of $3$-manifolds satisfying the assumptions in our Main Theorem. The assumption from item (i) is clearly fulfilled for any compact $\Sigma$, since $\mathrm{ker}(\Delta_{\Sigma,0})=\{\text{constant functions}\}$ in this case. In the non-compact case, a simple example is provided by $\Sigma=\mathbb{R}^{3}$. 
    More generally, if $(\Sigma,\gamma)$ is non-compact, complete, and has Ricci curvature bounded from below, then $0\in\sigma(\Delta_{\Sigma,0})$ if and only if it is \textit{not open at infinity}, i.e.~if there does not exist any constant $C>0$ s.t.~$\mathrm{area}(\partial\mathcal{D})\geq C\cdot\mathrm{vol}(\mathcal{D})$ for any domain $\mathcal{D}$ with smooth compact closure \cite{Buser}. Let us also remark that $\mathrm{ker}(\Delta_{\Sigma,0})=\{0\}$ does in general not imply $0\notin\sigma(\Delta_{\Sigma,0})$ in the non-compact case, as the example of $\mathbb{R}$ shows.

    The assumption from item (ii), i.e.~the existence of non-trivial $\L^{2}$-harmonic $1$-forms, is fulfilled for compact $\Sigma$ if and only if $\Sigma$ is not simply-connected as a consequence of de Rham's Theorem. In the non-compact case, a non-existence theorem was proved in \cite{Dodziuk1981} for complete manifolds with non-negative Ricci tensor and either infinite volume or positive Ricci tensor at at least one point. On the other hand, manifolds with non-trivial harmonic $\L^{2}$-forms can be easily constructed by considering manifolds on which there exists a bounded harmonic functions with finite Dirichlet energy, see e.g.~\cite{Cao1997}.
\end{examples}

%
%
%
%
%
\paragraph{Acknowledgements and funding}
We are most grateful to Lyonell Boulton, Andrea Carbonaro and Christian G\'erard for inspiring discussions related to the content of this paper and to Edward Witten for insightful comments on the manuscript. MC~was supported by EPSRC Fellowship EP/X01021X/1. SM is partially supported by INFN. GS gratefully acknowledges hospitality of the Heriot-Watt University and Maxwell Institute for Mathematical Sciences in Edinburgh. This work is part of the activities of the INdAM-GNFM and was supported by the MIUR Excellence Department Project 2023-2027 awarded to the Department of Mathematics of the University of Genoa.
\end{document}